\newcommand{\QQ}{{\mathbb Q}}
\newcommand{\ZZ}{{\mathbb Z}}
\newcommand{\NN}{{\mathbb N}}
\newcommand{\cC}{{\mathcal C}}
\newcommand{\cD}{{\mathcal D}}
\newcommand{\cO}{{\mathcal O}}
\newcommand{\cL}{{\mathcal L}}
\newcommand{\cA}{{\mathcal A}}
\newcommand{\cP}{{\mathcal P}}
\newtheorem{theorem}{Theorem}
\newtheorem{proposition}[theorem]{Proposition}
\theoremstyle{definition}
\theoremstyle{remark}
\newtheorem{remark}{Remark}
\theoremstyle{conjecture}
\newtheorem{conjecture}{Conjecture}
\numberwithin{equation}{section}
\begin{document}

\title{Markoff--Rosenberger triples in arithmetic progression}

\author{Enrique Gonz\'alez--Jim\'enez}
\address{Universidad Aut{\'o}noma de Madrid, Departamento de Matem{\'a}ticas and Instituto de Ciencias Matem{\'a}ticas (ICMat), Madrid, Spain}
\email{enrique.gonzalez.jimenez@uam.es}
\urladdr{http://www.uam.es/enrique.gonzalez.jimenez}
\author{Jos\'e M. Tornero}
\address{Departamento de \'Algebra, Universidad de Sevilla. P.O. 1160. 41080 Sevilla, Spain.}
\email{tornero@us.es}
\thanks{The first author was partially  supported by the grant MTM2009--07291. The second author was partially  supported by the grants  FQM--218 and P08--FQM--03894, FSE and FEDER (EU)}
\subjclass[2010]{Primary: 11D25; Secondary: 11D45, 14G05}
\keywords{Markoff equation, arithmetic progression.}

\begin{abstract}
We study the solutions of the Rosenberg--Markoff equation $ax^2+by^2+cz^2 = dxyz$ (a generalization of the well--known Markoff equation). We specifically focus on looking for solutions in arithmetic progression that lie in the ring of integers of a number field. With the help of previous work by Alvanos and Poulakis, we give a complete decision algorithm, which allows us to prove finiteness results concerning these particular solutions. Finally, some extensive computations are presented regarding two particular cases: the generalized Markoff equation $x^2+y^2+z^2 = dxyz$ over quadratic fields and the classic Markoff equation $x^2+y^2+z^2 = 3xyz$ over an arbitrary number field.
\end{abstract}

\maketitle


\section{Variations on the Markoff equation}

The Markoff equation is the Diophantine equation
$$
x^2+y^2+z^2 = 3xyz; \quad x,y,z \in \ZZ_+;
$$
which was studied first by Markoff in \cite{Markoff1, Markoff2}. In those papers, many interesting properties related to the solutions of this equation were proved. Among other things, Markoff showed there were infinitely many solutions (so--called {\em Markoff triples}), gave a procedure to construct new solutions from old ones and proved that, in fact, all integral solutions could be constructed from one {\em fundamental solution} $(1,1,1)$.

Since then, the Markoff equation and its solutions have been object of intense research. Remarkably, Frobenius, while studying the Markoff equation over Gaussian integers \cite{Frobenius}, noticed that, for a given ordered solution $x \leq y \leq z$, there was no other ordered solution $x' \leq y' \leq z$. This conjecture, widely known as the Frobenius unicity conjecture, has remained open since, although some important partial results have been settled \cite{Baragar,BaragarRM,Button1, Schmutz,Button2}.

Our work started with the Markoff equation, but from a different point of view. Rather than looking at the Frobenius conjecture, we decided to focus on looking for Markoff triples with some extra structure: more precisely, those Markoff triples which are in arithmetic progression (a.p. in what follows). However, in our study, it became apparent that a generalization could be considered instead of the original Markoff equation. From the many extended versions of the Markoff equation, two have been by far the most studied in the literature.

First, the Hurwitz (or Markoff--Hurwitz) equation \cite{Hurwitz}, given by
$$
x_1^2+x_2^2+...+x_n^2 = a x_1x_2...x_n,
$$
for which Hurwitz himself proved that all solutions could be constructed from a set of easy mappings acting on a finite set of particular solutions.

The other succesfully studied generalization mentioned above, and the one who will center much of our work, is the so--called Markoff--Rosenberger equation \cite{Rosenberger}:
$$
ax^2+by^2+cz^2 = dxyz.
$$

Note that, in \cite{Rosenberger} (and in all subsequent articles concerning the Rosenberger generalization) it is further required that $a|d$, $b|d$, $c|d$. This comes from Rosenberger's original motivation, related to binary forms and Fuchsian groups. We will not assume this for most of our work. As far as we know, there are no results in the literature concerning the Markoff--Rosenberger equation with no conditions.

Let us consider then the equation 
$$
ax^2+by^2+cz^2 = dxyz; 
$$
and assume we have a solution in a.p. which may be written in the form
$$
x = X, \quad y = X + Y, \quad z = X + 2 Y.
$$

The equation then becomes
$$
dX^3 + 3dX^2Y + 2dXY^2 - (a+b+c)X^2 - (2b+4c)XY - (b+4c)Y^2=0,
$$
which is the equation of a cubic curve.

For what follows we will be taking homogeneous coordinates $[X:Y:Z]$ and considering a projective closure of our curve:
$$
\cC:dX^3 + 3dX^2Y + 2dXY^2 - (a+b+c)X^2Z - (2b+4c)XYZ - (b+4c)Y^2Z=0,
$$ 
which has, regardless of $(a,b,c,d)$, a singular point (actually a node) at $[0:0:1]$ and three points at infinity: $[0:1:0]$, $[1:-1:0]$ and $[2:-1:0]$. Our aim was finding all integral affine points of $\cC$.

This situation makes particularly simple the use of the \verb|INTEGRAL-POINTS| algorithm by Alvanos and Poulakis \cite{AlvanosPoulakis}. What makes this algorithm more remarkable is the fact that it works for arbitrary number fields computing affine points whose coordinates lie in the corresponding ring of integers. So, from Markoff integral triples we had come to Markoff--Rosenberger triples which lie in the ring of integers of a number field (always in a.p.).


In the third section we will develop an algorithm to compute all the Markoff--Rosenberger triples over a number field $K$ for fixed $a,b,c,d\in\mathcal{O}_K$, where $\mathcal{O}_K$ denotes the ring of integers of $K$. In particular, this algorithm allows us to give a characterization of the values $a,b,c,d$ such that there exists a non--trivial triple in a.p. over $\mathcal{O}_K$ (see Proposition \ref{prop1}).  In the fourth section we show several finiteness results related to our problem. In the last section we will deal first with the generalized Markoff equation, obtaining theoretical results over the rationals and over imaginary quadratic fields.  Moreover, we will include some extensive computations we have performed. These computations lead to a well--supported conjecture over quadratic fields that might encourage future research in this area. Finally, we will work with the classic Markoff equation but over arbitrary number fields with bounded discriminant.

But first, as the \verb|INTEGRAL-POINTS| algorithm will be heavily used in what follows, we will recall its steps, for the convenience of the reader. The proofs concerning correctness and termination, as well as many other interesting features can be found in the original reference \cite{AlvanosPoulakis}.

\section{An algorithmic short trip}

Alvanos and Poulakis developed in \cite{AlvanosPoulakis} a very polished algorithm for the computation of the set of affine points on a genus zero curve whose coordinates can be chosen to lie in a ring of algebraic integers. The version presented here is actually the so--called (by the authors) \verb|INTEGRAL-POINTS3|, where \verb|3| refers to the number of points at infinity. The precise algorithm goes as follows: fix a number field $K$ and we are given a curve, say $\cC$, defined by an affine equation 
$$
F(X,Y)=0, \mbox{ where } F(X,Y) \in K[X,Y], \; \deg(F)=\deg_Y(F)=N;
$$ 
verifying that $\cC$ has exactly three smooth points at infinity $\{V_1,V_2,V_3 \}$. We want to compute the affine points of $\cC(\cO_K)$.

\noindent {\bf Step 1.} Compute the singular points of $\cC$ and save those in $\cC(\cO_K)$.

\noindent {\bf Step 2.} Find number fields $M_1$, $M_2$ such that $K \subset M_i \subset \overline{K}$ and polynomials
$$
a_i \in M_i[X,Y], \quad b_i \in M_i[X];
$$
such that $\deg_Y (a_i) < N$ and 
$$
f_i(X,Y):=a_i(X,Y)/b_i(X)\in \cL(V_3 - V_i),
$$
where $\cL(V_3 - V_i)$ denotes the Riemann-Roch space of the divisor $V_3 - V_i$.

\noindent {\bf Step 3.} Compute $\alpha_i, \beta_i \in \cO_{M_i}$ such that $\alpha_i f_i$ and $\beta_i/f_i$ are integral over $\cO_{M_i}[X]$. This step is carried out by an algorithm called \verb|DENOMINATORS| which is presented before in \cite{AlvanosPoulakis}.

\noindent {\bf Step 4.} Determine maximal sets $A_i \subset \cO_{M_i}$ of elements which are not pairwise associate and such that its norm divides that of $\alpha_i\beta_i$.

\noindent {\bf Step 5.} Let $M$ be the normal closure of the composition of $M_1$ and $M_2$. Solve then, in $M$, the equation
$$
c_1f_1+c_2f_2 = 1.
$$

\noindent {\bf Step 6.} For every $(k_1, k_2) \in A_1 \times A_2$ determine the (finite) solution set $S(k_1,k_2)$ of the unit equation
$$
\left( \frac{c_1k_1}{\alpha_1} \right) U_1 + \left( \frac{c_2k_2}{\alpha_2} \right) U_2 = 1.
$$
The finiteness of this set of solutions is a well-known fact which goes back to Siegel \cite{Siegel}. A more recent account on how to actually compute this set of solutions can be found, for instance in \cite{Smart} or the most efficient algorithm of Wildanger \cite{Wildanger}.

\noindent {\bf Step 7.} For any $(k_1,k_2) \in A_1 \times A_2$ and $(u_1,u_2) \in S(k_1,k_2)$ compute the resultant
$$
R_{(k_1,u_1)} (X) = \mbox{Res}_Y \left( F(X,Y), \alpha_1a_1 (X,Y) - k_1u_1b_1(X) \right),
$$
and determine, the set $S$ of solutions in $\cO_K$ for some $R_{(k_1,u_1)}$.

\noindent {\bf Step 8.} For any $v \in S$ compute the possible pairs $(v,w) \in \cC(\cO_K)$.

\noindent {\bf Step 9.} The affine points of $\cC(\cO_K)$ are those computed in Step 1 and those computed in Step 8.

\section{Integral points on curves at work}\label{sec3}

First we must put our curve in a suitable form for \verb|INTEGRAL-POINTS3|, being careful in order to preserve integral points. This can be achieved making the change
$$
X \longmapsto Y - X, \quad\quad Y \longmapsto X, \quad\quad Z \longmapsto Z;
$$
which preserves not only integral projective points but also points at infinity (this is important regarding the algorithm), hence obtaining 
$$
\cD: dY^3-(a+b+c)Y^2Z-dX^2Y+2(a-c)XYZ-(a+c)X^2Z=0.
$$

Our new curve has still the same singular point $[0:0:1]$ (a node) and three points at infinity: 
$$
P_1 = [1:1:0], \quad\quad P_2=[1:-1:0], \quad\quad P_3 =[1:0:0],
$$
all of them smooth. Our aim is then to find all affine points in $\cD(\cO_K)$.

Moving on to Step 2 we must find then generators for $\cL(P_3-P_1)$ and $\cL(P_3-P_2)$. These are given by easy calculation (\'a la Riemann--Roch, so to say):
$$
f_1(X,Y) = \frac{dY^2 - (a+b+c)Y - dX^2 + (a-3c)X}{(a+c)X} \in \cL(P_3-P_1)
$$
$$
f_2(X,Y) = \frac{-dY^2 + (a+b+c)Y + dX^2 + (-3a+c)X}{(a+c)X} \in \cL(P_3-P_2)
$$

Next (Step 3), consider
$$
\alpha_1 = \alpha_2 = a+c, \quad \beta_1=b+4c, \quad \beta_2=b+4a,
$$
and define 
$$
\begin{array}{l}
R_1(T) = T^3 + (dX+a+5c)T^2 + (a+c)(2dX+b+8c)T + (a+c)^2(b+4c), \\
R_2(T) = T^3 + (-dX+5a+c)T^2 + (a+c)(-2dX+8a+b)T + (a+c)^2(b+4a), \\
S_1(T) = T^3 + (2dX+b+8c)T^2 + (b+4c)(dX+a+5c)T + (a+c)(b+4c)^2, \\
S_2(T) = T^3 + (-2dX+8a+b)T^2 + (4a+b)(-dX+5a+c)T + (a+c)(b+4a)^2, \\
\end{array}
$$
for which 
$$
R_i \left( \alpha_if_i(X,Y) \right) = S_i \left( \frac{\beta_i}{f_i(X,Y)} \right) = 0, \quad i=1,2.
$$

Now we consider (Step 4) the following sets:
\begin{equation}\label{Ais}
A_i = \left\{ k_i \in \cO_K \; | \; \mathcal{N}_{K}(k_i) \mbox{ divides } \mathcal{N}_{K}(\alpha_i \beta_i) \right\} / \sim,
\end{equation}
where $\sim$ denotes we are actually interested in the equivalence class (modulo associated elements) and $\mathcal{N}_{K}$ denotes the absolute norm map. Note that this step depends on $K$.

As for Step 5 is concerned, we have $f_1+f_2 = -2$.

Now we must consider (again depending on $K$), for every $(k_1,k_2) \in (A_1,A_2)$, the finite set $S_{k_1,k_2}$ of solutions to the unit equation
\begin{equation}\label{unit}
k_1u_1+k_2u_2 = -2(a+c).
\end{equation}

In Step 7, for any $(k_1,k_2) \in (A_1,A_2)$ and for any $(u_1,u_2) \in S_{k_1,k_2}$, we have
\begin{eqnarray*}
R_{k_1,u_1} (X) &=& \mbox{Res}_Y \left( F(X,Y), (a+c)a_1 (X,Y) - k_1u_1b_1(X) \right) \\
&=& -(a+c)^3d^2X^3 \left( \displaystyle X - z_{k_1,u_1} \right),
\end{eqnarray*}
where
$$
z_{k_1,u_1} = \frac{(a+c+k_1u_1)(4ck_1u_1+k_1^2u_1^2+(a+c)(b+4c))}{u_1u_2k_1k_2d}.
$$
As the $X$--coordinates of all affine points in $\cD(\cO_K)$ appear as roots (in $\cO_K$, of course) of some $R_{k_1,u_1} (X)$, we have two candidates $X=0$ and $X =z_{k_1,u_1} $.

Next step is computing the possible points. For $X=0$ we have
$$
\left( 0, \frac{a+b+c}{d} \right) \in \cD(K).
$$

For $X=z_{k_1,u_1}$ we have 
$$
\left( z_{k_1,u_1}, \frac{a+c}{a+c+k_1u_1} z_{k_1,u_1} \right) \in \cD(K),
$$
and, remarkably,
$$
\left( z_{k_1,u_1}, \frac{\left[ (a+c)(b+4c)+(2a+b+2c)k_1u_1 \pm \sqrt{\Delta} \right] (a+c+k_1u_1)}{-2u_1u_2k_1k_2d} \right) \in \cD \left( L \right),
$$
where $L=K \left( \sqrt{\Delta} \right)$ and
\begin{eqnarray*}
\Delta &=& 4(k_1u_1)^4 + 8(a+3c)(k_1u_1)^3 + (4a^2+b^2+8ab+56ac+8bc+52c^2)(k_1u_1)^2\\
&& \quad +2(a+c)(b+4c)(6a+b+6c)k_1u_1+(a+c)^2(b+4c)^2
\end{eqnarray*}

Hence, we have the following characterization.

\begin{proposition}\label{prop1}
Let $K$ be a number field and $a,b,c,d\in \mathcal{O}_K$. Then, there exists a non--trivial solution in a.p. over $\mathcal{O}_K$ for the Markoff--Rosenberger equation $ax^2+by^2+cz^2=dxyz$ if and only if one of the following conditions hold:
\begin{enumerate}
\item[(a)] $d\,|\,(a+b+c)$.
\item[(b)] $d\,|\,(a+c+k_1u_1)(k_1^2u_1^2+4ck_1u_1+(a+c)(b+4c))$, for some $(k_1,k_2) \in (A_1,A_2)$ and some $(u_1,u_2) \in S_{k_1,k_2}$.
\end{enumerate}
\end{proposition}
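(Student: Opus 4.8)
The strategy is to read the proposition directly off the output of \verb|INTEGRAL-POINTS3|. First I would record that the coordinate change $X\mapsto Y-X$, $Y\mapsto X$, $Z\mapsto Z$ is a linear automorphism of $\mathbb{P}^2$ defined over $K$, hence induces a bijection between $\cC(\cO_K)$ and $\cD(\cO_K)$ preserving both integral points and the points at infinity. Under this bijection an affine point $(X,Y)\in\cD$ corresponds to the triple $(Y-X,\,Y,\,Y+X)$, that is, to an arithmetic progression with middle term $Y$ and common difference $X$. In particular the only point mapping to the trivial triple $(0,0,0)$ is the node $[0:0:1]=(0,0)$, so non--trivial a.p. solutions over $\cO_K$ correspond precisely to the affine points of $\cD(\cO_K)$ other than the node. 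Relying on the correctness of \verb|INTEGRAL-POINTS3| established in \cite{AlvanosPoulakis}, every such point is produced either in Step 1 (only the node, which we discard) or in Step 8.

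Next I would examine the two families delivered by Step 8. The point with $X=0$ is $\left(0,(a+b+c)/d\right)$; it lies in $\cD(\cO_K)$ if and only if $(a+b+c)/d\in\cO_K$, i.e. $d\mid(a+b+c)$, and it yields the constant progression $\left((a+b+c)/d,(a+b+c)/d,(a+b+c)/d\right)$, which is non--trivial exactly when $a+b+c\neq 0$. This is case (a). Every remaining point from Step 8 has the form $\left(z_{k_1,u_1},\tfrac{a+c}{a+c+k_1u_1}z_{k_1,u_1}\right)$ for some admissible $(k_1,k_2)\in A_1\times A_2$ and $(u_1,u_2)\in S_{k_1,k_2}$; since its first coordinate $z_{k_1,u_1}$ is nonzero, the associated triple is genuinely non--constant, hence non--trivial. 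Thus a non--trivial solution not already accounted for by (a) exists if and only if one of these points lies in $\cD(\cO_K)$ for some admissible tuple.

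It remains to show that this last condition is equivalent to (b), and this is the only step requiring real care. Because $(u_1,u_2)$ solves the unit equation \eqref{unit}, both $u_1$ and $u_2$ are units and therefore irrelevant to any divisibility; writing $w=k_1u_1$ and substituting $k_2u_2=-2(a+c)-w$ from \eqref{unit}, the denominator $u_1u_2k_1k_2d$ of $z_{k_1,u_1}$ collapses to $-w\bigl(w+2(a+c)\bigr)d$. I would then verify that the pair $\left(z_{k_1,u_1},\tfrac{a+c}{a+c+k_1u_1}z_{k_1,u_1}\right)$ is integral precisely when $d$ divides the numerator $P:=(a+c+w)\bigl(w^2+4cw+(a+c)(b+4c)\bigr)$, which is exactly the divisibility displayed in (b).

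The hard part is exactly this reconciliation of the denominator with the stated condition. The units $u_1,u_2$ disappear cleanly, but one must show that, for the tuples that can actually occur, the surviving factor $w\bigl(w+2(a+c)\bigr)$ (equivalently $k_1k_2$ up to units) contributes no constraint beyond $d\mid P$; this is where the norm conditions defining the sets $A_i$ in \eqref{Ais}, together with the relation coming from \eqref{unit}, must be brought to bear. Simultaneously one has to confirm that the second coordinate is integral whenever the first is, and that $z_{k_1,u_1}\neq 0$ so that the resulting triple is non--trivial. Once this bookkeeping is settled, the equivalence with (b) follows, and combining it with case (a) yields the stated characterization; everything else is a direct translation through the algorithm.
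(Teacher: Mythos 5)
Your proposal is correct and follows the paper's argument essentially verbatim: Proposition~\ref{prop1} is stated there with no separate proof, as an immediate readout of the Section~\ref{sec3} run of \verb|INTEGRAL-POINTS3| on $\cD$ (``Hence, we have the following characterization''), with the dictionary $(X,Y)\leftrightarrow (Y-X,\,Y,\,Y+X)$, the node $(0,0)$ as the trivial triple, the point $\left(0,(a+b+c)/d\right)$ giving condition~(a), and the points with first coordinate $z_{k_1,u_1}$ giving condition~(b), exactly as you set things up. The step you single out as ``the hard part'' --- that the factor $u_1u_2k_1k_2=-k_1u_1\bigl(k_1u_1+2(a+c)\bigr)$ in the denominator of $z_{k_1,u_1}$ should impose nothing beyond $d\mid P$ --- is in fact passed over in silence by the paper as well, which tacitly identifies integrality of $z_{k_1,u_1}$ with condition~(b); note only that the direction the characterization genuinely needs (non-trivial solution $\Rightarrow$ (a) or (b)) is the trivial one, since $z_{k_1,u_1}\in\cO_K$ forces $d\mid P$ outright, and that at a genuine integral point the element-wise divisibilities $k_1u_1\mid(a+c)(b+4c)$ and $k_2u_2\mid(a+c)(b+4a)$ that your bookkeeping would require come from the integrality of $\alpha_if_i$ and $\beta_i/f_i$ (via the monic polynomials $R_i$, $S_i$ of Step~3), not from the norm conditions defining the sets $A_i$ alone, which are strictly weaker.
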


\section{Finiteness results}

We will write, for a given ring $R$, $R/R^*$ for the set of elements of $R$ up to multiplication by a unit (as customary), and $R/R^2$ for the set of elements of $R$ with no square root in $R$.

Let us call from now on, for a given number field $K$ and given $a,b,c,d \in \mathcal{O}_K$,
$$
\cA\cP_{(a,b,c,d)} (K) = \left\{ \mbox{$\cO_K$--solutions in a.p. to $ax^2+by^2+cz^2 = dxyz$ } \right\},
$$
where obviously we always have the trivial solution $(0,0,0)$. Remember that we are disregarding the Rosenberger conditions $a|d$, $b|d$, $c|d$. 

First, mind that we have set--up a bijection between $\cA\cP_{(a,b,c,d)} (K)$ and the affine points of $\cD(\cO_K)$, which we already know to be a finite set \cite{Maillet,Lang} (because $\cD$ has three points at infinity). 

\begin{theorem}
Let $K$ be a number field and  $a,b,c,d \in \mathcal{O}_K$. Then $\cA\cP_{(a,b,c,d)} (K)$ is finite.
\end{theorem}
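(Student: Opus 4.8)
The plan is to reduce the statement to a finiteness theorem for integral points on a rational curve and then invoke the classical Siegel--type result already cited. First I would recall the correspondence set up in Section~\ref{sec3}: every solution in a.p. has the shape $(X,X+Y,X+2Y)$ with $(X,Y)\in\cO_K^2$, and the integral--point--preserving change of variables turns such a pair into an affine $\cO_K$--point of $\cD$, and conversely. Since this is a bijection between $\cA\cP_{(a,b,c,d)}(K)$ and the set of affine points of $\cD(\cO_K)$, it is enough to bound the number of affine $\cO_K$--points of $\cD$.

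Next I would supply the geometric input needed for the finiteness theorem. The curve $\cD$ is a plane cubic, hence of arithmetic genus one; its only singularity is the node at $[0:0:1]$, which lowers the geometric genus to zero, so the normalization $\widetilde{\cD}$ is a rational curve. Setting $Z=0$ gives $dY(Y^2-X^2)=0$, which exhibits the three distinct smooth points $P_1,P_2,P_3$ at infinity; being smooth, they lift to three distinct points of $\widetilde{\cD}$. Thus $\cD$ is birational to a genus--zero curve whose smooth model carries at least three points outside the affine chart.

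With these two facts in place, the finiteness is precisely the content of the theorem of Maillet and Lang \cite{Maillet,Lang} (the number--field form of Siegel's theorem): an affine curve over $\cO_K$ whose smooth projective model has genus zero and at least three points at infinity has only finitely many $\cO_K$--integral points. Applying this to $\cD$ and transporting the conclusion back along the bijection yields the finiteness of $\cA\cP_{(a,b,c,d)}(K)$.

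The main obstacle is not the appeal to Siegel's theorem but the uniform verification of its geometric hypotheses across all parameter tuples $a,b,c,d\in\cO_K$. One must confirm that $\cD$ is irreducible with the node as its sole singularity and that $P_1,P_2,P_3$ remain distinct; degenerate choices --- for instance those for which the cubic splits off a line and thereby acquires a whole line of integral points --- have to be excluded or treated separately. Guaranteeing that the ``genus zero with three punctures'' picture persists for every admissible $(a,b,c,d)$ is where the genuine care lies.
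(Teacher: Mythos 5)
Your proposal is correct and takes essentially the same route as the paper, which likewise observes the bijection between $\cA\cP_{(a,b,c,d)}(K)$ and the affine points of $\cD(\cO_K)$ and then cites the Maillet--Lang finiteness theorem \cite{Maillet,Lang} for curves with three points at infinity; your explicit verification that $\cD$ is a nodal (hence rational) cubic with three distinct smooth points at infinity simply fills in details the paper leaves implicit. Your closing caveat about degenerate parameters is well taken --- for instance $(a,b,c,d)=(1,-2,1,0)$ gives infinitely many a.p. solutions $(t,t,t)$ --- so some nondegeneracy (at least $d\neq 0$) is tacitly assumed by the paper, but this does not affect the method.
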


The following results follow directly from the characterization given on Proposition \ref{prop1} above and deep results of Corvaja and Zannier \cite{CZ}:

\begin{theorem}
 Let $K$ be a number field and  $a,b,c \in \mathcal{O}_K$. Then

\begin{enumerate} 
\item We have
$$
\# \left\{ d \in\mathcal{O}_K / \cO_K^* \; | \; \cA\cP_{(a,b,c,d)} (K) \neq \{(0,0,0)\} \right\}< \infty.
$$
\item If $\Delta \in \mathcal{O}_K / \cO_K^2$, then
$$
\# \left\{ d \in \mathcal{O}_K / \cO_K^* \; | \;  \cA\cP_{(a,b,c,d)} (K) \subsetneq \cA\cP_{(a,b,c,d)} \left( K \left( \sqrt{\Delta} \right) \right)  \right\} < \infty.
$$
\item Let $L/K$ be a finite algebraic extension. Then
$$
\# \left\{ d \in \mathcal{O}_K / \cO_K^* \; | \; \cA\cP_{(a,b,c,d)} (K) \subsetneq \cA\cP_{(a,b,c,d)} (L)  \right\} < \infty.
$$
\item We have
$$
\# \left\{ (d,\Delta)\in \mathcal{O}_K/ \cO_K^*\times \mathcal{O}_K / \cO_K^2 \; | \; \cA\cP_{(a,b,c,d)} (K) \subsetneq \cA\cP_{(a,b,c,d)} \left( K \left( \sqrt{\Delta} \right) \right)  \right\} < \infty.
$$
\item Let $D \in \ZZ_{>0}$ and denote by $\mathcal{A}_{D}(K)$ the set of algebraic extensions of degree $D$ of $K$ up to isomorphism. Then
$$
\# \left\{ (d,L) \in \mathcal{O}_K / \cO_K^*\times \mathcal{A}_{D}(K)\; | \; \; \cA\cP_{(a,b,c,d)} (K) \subsetneq \cA\cP_{(a,b,c,d)} (L)  \right\} < \infty.
$$
Moreover, we have 
$$
\# \left[ \bigcup_{L\in\mathcal{A}_{D}(K)} \left( \bigcup_{d \in \mathcal{O}_K/ \cO_K^*} \cA\cP_{(a,b,c,d)} (L) \right) \right] < \infty.
$$
\end{enumerate}
\end{theorem}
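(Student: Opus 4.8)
The plan is to derive everything from Proposition \ref{prop1} together with the explicit description of the affine points of $\cD$ obtained just before it. First I would split the solutions into the two families coming from conditions (a) and (b). Condition (a) produces only the constant progressions $(t,t,t)$ with $t=(a+b+c)/d$; since such a $t$ lies in $K$ and is integral it in fact lies in $\mathcal{O}_K$ and divides the fixed element $a+b+c$, so (working modulo units, as all the statements do) these account for only finitely many triples, all already defined over $K$ and hence insensitive to the field $L$. Thus the whole problem is concentrated in condition (b), whose solutions are governed by the datum $w=k_1u_1$ subject to the unit equation \eqref{unit}, with $k_1$ ranging over the finite set \eqref{Ais}; the surviving coordinate is the rational function $z_{k_1,u_1}$ of $w$ and $d$, and the genuinely new coordinates live in $K(\sqrt{\Delta})$, where $\Delta$ is the fixed cubic polynomial in $w$ displayed above.

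For items (1)--(3), where the relevant field is fixed, I would argue as follows. Over a fixed field the solution set of the unit equation \eqref{unit} is finite by Siegel's theorem \cite{Siegel}, so $w$, and therefore the numerator of $z_{k_1,u_1}$ and the value $\Delta$, range over a finite set; each nonzero value of these quantities has only finitely many divisors up to units, which pins $d$ down to a finite set. The degenerate case in which a numerator vanishes must be checked to collapse to the $X=0$ point, i.e.\ to condition (a), so that it contributes no further values of $d$; granting this, items (1)--(3) follow directly, since each fixes the ambient field ($K$, $K(\sqrt{\Delta})$, or $L$) and asks only for finiteness of the set of admissible $d$.

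The substance of items (4) and (5) is that infinitely many fields now occur: in (4) the quadratic fields $K(\sqrt{\Delta})$ as $\Delta$ varies, and in (5) all extensions of degree $D$. Here the per-field argument breaks down, because the unit groups $\mathcal{O}_L^{*}$ grow without bound and Siegel's theorem gives no control that is uniform in $L$. This is exactly the point at which the deep results of Corvaja and Zannier \cite{CZ} are indispensable: they supply a finiteness that is uniform over all number fields of bounded degree, bounding the heights of the coordinates of the resulting triples (equivalently, confining the admissible Diophantine data to a set that does not grow with the extension). Granting this uniform input, the coordinates of every non-constant triple occurring in the union are algebraic numbers of degree at most $D[K:\QQ]$ over $\QQ$ and of bounded height; Northcott's theorem (see \cite{Lang}) then makes the union in the \emph{moreover} part of (5) finite, and the same finite list of admissible $d$ and of fields of definition $K(\sqrt{\Delta})\subseteq L$ yields the finiteness of the sets of pairs $(d,\Delta)$ in (4) and $(d,L)$ in (5).

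The main obstacle, then, is precisely this uniformity. Everything reduces, via Proposition \ref{prop1}, to counting divisors of a few explicit quantities built from $w$; the elementary part is bounding those divisors over a single field, but the theorem demands a bound that survives the passage through an infinite family of extensions. Locating the correct Corvaja--Zannier statement and verifying that our parameters $w$ and $\Delta$ fall within its hypotheses---so that the set of solutions is genuinely independent of $L$ and the degree-$D$ bookkeeping in (5) closes up---is where the real work lies; once that is in place, the combinatorics of divisibility and Northcott's theorem finish the argument.
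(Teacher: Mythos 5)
Your proposal follows essentially the same route as the paper: items (1)--(3) are deduced directly from Proposition \ref{prop1} (finiteness of the sets $A_i$ and of the unit-equation solution sets over the one fixed field in play, then counting divisors of the resulting nonzero quantities up to units), and items (4)--(5) are reduced to the uniform finiteness result of Corvaja--Zannier, which is exactly what the paper does. The precise statement you left as the ``real work'' is their Corollary 1 as quoted verbatim in the paper's proof --- for a plane curve with three or more points at infinity, $\# \bigl( \bigcup_{[L:K] \leq D} \cC(\cO_L) \bigr) < \infty$ --- whose hypotheses $\cD$ satisfies on sight (three smooth points at infinity), and since it already yields finiteness of the union of integral points directly, your extra height/Northcott step is superfluous.
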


\begin{proof}
The first three statements come directly from Proposition \ref{prop1}. As for the remaining cases, let us recall Corollary 1 from \cite{CZ} for the case $\mathcal{O}_K$ which stated that if $\cC$ is a plane curve with three or more points at infinity, $K$ a number field  and $D \in \ZZ_{>0}$ then if $L$ runs through all algebraic extension of degree $D$ of $K$:
$$
\# \left( \bigcup_{[L:K] \leq D} \cC(\mathcal{O}_L ) \right) < \infty.
$$
Applying this above result to $\mathcal{D}$, we get the desired statements.
\end{proof}

\begin{remark} Rosenberger proved that, with the extra conditions we mentioned in the first section, the only equations with non--trivial integral solutions were those given by
$$
(a,b,c,d) \in \left\{ (1,1,1,1), \; (1,1,1,3), \; (1,1,2,2), \; (1,1,2,4), \; (1,2,3,6), \; (1,1,5,5) \right\}.
$$

All of them have solutions in a.p. The first five of them verify condition (a) on the characterization given at Proposition \ref{prop1}. The last one
$$
x^2 + y^2 + 5z^2 = 5xyz,
$$
does not, but it verifies the second condition (two solution in a.p. being $(-3,-1,1)$ and $(-7,-1,5)$).
\end{remark}

\section{Computational results} 
We have implemented in \verb|Magma| \cite{magma} the algorithm developed on section \ref{sec3}. Note that, given a number field there are only three major problems to solve: to compute the sets $A_i$ given on (\ref{Ais}), to solve the unit equation given by (\ref{unit}) and to determine if an algebraic number is integral; they are sorted out by the \verb|Magma| functions \verb|NormEquation|, \verb|UnitEquation| and  \verb|IsIntegral|, respectively.

\
 
Our original goal, the study of Markoff triples in a.p. can by now be easily achieved.

\

\begin{theorem} 
\
\begin{itemize}
\item $\mathcal{AP}_{(1,1,1,1)} (\QQ) = \{(0,0,0),(3,3,3),(-15,-6,3),(3,-6,-15)\}$.
\item $\mathcal{AP}_{(1,1,1,3)} (\QQ) = \{(0,0,0),(1,1,1),(-5,-2,1),(1,-2,-5)\}$.
\item If $d\ne 1,3$, then $\mathcal{AP}_{(1,1,1,d)} (\QQ) =\{(0,0,0)\}$.
\end{itemize}
\end{theorem}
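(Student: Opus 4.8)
The plan is to run the explicit instance of the \verb|INTEGRAL-POINTS3| computation assembled just before Proposition \ref{prop1} with $a=b=c=1$ and $K=\QQ$, so that $\cO_K=\ZZ$ and ``up to associates'' means up to sign. Recall that the excerpt already sets up a bijection between $\mathcal{AP}_{(1,1,1,d)}(\QQ)$ and the affine points of $\cD(\ZZ)$, where a $\cD$-point $(X,Y)$ corresponds to the triple $(Y-X,\,Y,\,Y+X)$ (this is the inverse of the change of variables producing $\cD$). First I would record the reduction to $d>0$: the map $(x,y,z)\mapsto(-x,-y,-z)$ carries a.p. triples to a.p. triples and identifies $\mathcal{AP}_{(1,1,1,d)}(\QQ)$ with $\mathcal{AP}_{(1,1,1,-d)}(\QQ)$, so it suffices to treat positive $d$. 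For these parameters $\alpha_1=\alpha_2=2$ and $\beta_1=\beta_2=5$, hence $\mathcal{N}_\QQ(\alpha_i\beta_i)=10$ and $A_1=A_2=\{1,2,5,10\}$ (the divisors of $10$, taken up to sign).

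Next I would solve the unit equation (\ref{unit}), which here reads $k_1u_1+k_2u_2=-4$ with $k_i\in\{1,2,5,10\}$ and $u_i\in\{\pm1\}$. A finite enumeration shows that the only admissible products $(k_1u_1,k_2u_2)$ are $(1,-5)$, $(-5,1)$ and $(-2,-2)$. Feeding these into the formula for $z_{k_1,u_1}$ and the resulting point on $\cD$, the degenerate branch $(-2,-2)$ gives $z_{k_1,u_1}=0$, i.e.\ the root $X=0$ of $R_{k_1,u_1}$, which yields the constant triple $x=y=z=3/d$; this lies in $\ZZ^3$ precisely when $d\mid 3$, recovering condition (a) of Proposition \ref{prop1}. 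The branches $(1,-5)$ and $(-5,1)$ give $z_{k_1,u_1}=\mp 9/d$ and companion coordinate $-6/d$, hence, after translating back through $(Y-X,Y,Y+X)$, the triples $(3/d,-6/d,-15/d)$ and $(-15/d,-6/d,3/d)$, each integral exactly when $d\mid 3$.

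I would then check that the second $Y$-coordinate, the one involving $\sqrt{\Delta}$, contributes nothing new over $\QQ$: specializing the displayed $\Delta$ to $a=b=c=1$ gives $\Delta=4s^4+32s^3+129s^2+260s+100$ with $s=k_1u_1$, so $\Delta=525$ on the two genuine branches and $\Delta=-96$ on the degenerate one, neither of which is a square in $\QQ$. Thus those points are genuinely defined over a quadratic extension and do not lie in $\cD(\ZZ)$. Since the excerpt guarantees that the $X$-coordinate of every affine integral point of $\cD$ is a root of some $R_{k_1,u_1}$, the list produced is exhaustive: every non-trivial a.p.\ solution forces $d\mid 3$, i.e.\ $d\in\{1,3\}$. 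Substituting $d=1$ gives $\{(0,0,0),(3,3,3),(-15,-6,3),(3,-6,-15)\}$ and $d=3$ gives $\{(0,0,0),(1,1,1),(-5,-2,1),(1,-2,-5)\}$, while every other positive $d$ leaves only $(0,0,0)$.

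The difficulty here is not conceptual but one of careful, exhaustive bookkeeping. The main obstacle I expect is making the enumeration watertight: listing $A_1\times A_2$ and all unit-equation solutions with none omitted, correctly transporting each $\cD$-point back to a triple $(x,y,z)$, and—most easily overlooked—verifying that $\Delta$ is a non-square so the $\sqrt{\Delta}$-branch is inert over $\QQ$, and that the degenerate branch $(-2,-2)$ merely reproduces the constant solution rather than a spurious family of integral points valid for \emph{all} $d$.
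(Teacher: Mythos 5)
Your proof is correct, but it takes a genuinely different route from the paper's. The paper disposes of the case $d\neq 1,3$ by citing Markoff's classical theorem that $x^2+y^2+z^2=dxyz$ then has no non-trivial integral solutions at all (so a fortiori none in a.p.), and for $d=1,3$ it simply runs the \texttt{Magma} implementation of the Section \ref{sec3} algorithm to produce the two lists; it does not carry out the computation on paper. You instead execute the algorithm symbolically and uniformly in $d$: with $a=b=c=1$ one gets $\alpha_i\beta_i=10$, hence $A_1=A_2=\{1,2,5,10\}$ up to sign; your enumeration of the unit equation $k_1u_1+k_2u_2=-4$ is exhaustive (the only products are $(1,-5)$, $(-5,1)$, $(-2,-2)$); the specializations $z_{k_1,u_1}=\mp 9/d$ with companion coordinate $-6/d$, the translation of a $\cD$-point $(X,Y)$ to the triple $(Y-X,\,Y,\,Y+X)$, and the values $\Delta=525$ and $\Delta=-96$ (non-squares, so the $\sqrt{\Delta}$-branch is inert over $\QQ$ for every $d$) all check out, yielding exactly the divisibility condition $d\mid 3$ and the listed triples. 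What your route buys: a self-contained, hand-verifiable argument that needs neither Markoff's non-existence theorem nor machine computation, and that \emph{derives} the restriction to $d\in\{1,3\}$ rather than importing it --- essentially it is Proposition \ref{prop1} made fully explicit for $(1,1,1,d)$ over $\QQ$. What the paper's route buys: brevity, and reuse of the same implementation employed throughout the computational section. One point your uniform computation usefully surfaces: over $\ZZ$ the condition $d\mid 3$ gives $d\in\{\pm 1,\pm 3\}$, and the sets $\mathcal{AP}_{(1,1,1,-1)}(\QQ)$ and $\mathcal{AP}_{(1,1,1,-3)}(\QQ)$ are the negations of the $d=1,3$ sets, so they are not $\{(0,0,0)\}$; the theorem's third bullet must therefore be read for positive $d$ (equivalently, for $d$ up to units, as in the paper's finiteness statements), which is precisely the normalization your opening reduction via $(x,y,z)\mapsto(-x,-y,-z)$ makes explicit, whereas the paper leaves it implicit.
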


Actually, Markoff himself proved \cite{Markoff1,Markoff2} that, if $d \neq 1,3$, the equation has no integer solutions, so the general case is trivial. For the cases $d=1,3$ we run our algorithm to obtain the above results. Note that a simpler algorithm is available thanks to Poulakis and Voskos \cite{PoulakisVoskos} when the base field is $\QQ$.

So, as a direct application of the algorithms explained above, we tried next to study exhaustively the generalized Markoff equation
$$
x^2+y^2+z^2 = dxyz, \quad d \in \ZZ
$$
looking for solution in a.p. over arbitrary quadratic fields. 

When we move to this, more general, case, we find a groundbreaking paper by Silverman \cite{Silverman}. In that paper, known facts from the integral case (how to obtain all solutions, number of points of bounded height and so on) are carefully generalized for the imaginary quadratic case. For this case we obtain the following result.

\begin{theorem}
Let $D\in \ZZ/\ZZ^2$, $D<0$ and $i=\sqrt{-1}$. Then   
$$
\begin{array}{lcl}
\bullet\,\, \mathcal{AP}_{(1,1,1,1)} (\QQ(i)) &=&\left\{(0,0,0),(3,3,3),(-15,-6,3),(3,-6,-15), \right. \\[1mm]
& & (2, \pm 2i + 2, \pm 4i + 2),  (\pm i + 2, 2, \mp i + 2),  \\[1mm]
& &(\pm i + 2, \pm 2i - 1, \pm 3i - 4) ,(\pm 2i - 1, -1, \mp 2i - 1), \\[1mm]
& & \left.(\pm 4i + 2, 2i + 2, 2),  (\pm 3i - 4, \pm 2i - 1, \pm i + 2) \right\}\ \\[1mm]
\bullet\,\, \mathcal{AP}_{(1,1,1,2)} (\QQ(i)) &=& \left\{  (0, 0, 0), (\pm 2i + 1, \pm i + 1, 1), (1, \pm i + 1, \pm 2i + 1)\right\}. \\[1mm]
\bullet\,\, \mathcal{AP}_{(1,1,1,d)} (\QQ(\sqrt{D}))  &=&\mathcal{AP}_{(1,1,1,d)} (\QQ), \mbox{ if }(D,d)\ne (-1,1),(-1,2).
\end{array}
$$
\end{theorem}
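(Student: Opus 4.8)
The plan is to specialize the \verb|INTEGRAL-POINTS3| computation of Section~\ref{sec3} to $a=b=c=1$, where it simplifies considerably: one has $\alpha_1=\alpha_2=2$ and $\beta_1=\beta_2=5$, the unit equation~(\ref{unit}) reads $k_1u_1+k_2u_2=-4$, and $z_{k_1,u_1}$ and the discriminant $\Delta$ are the explicit expressions displayed just before Proposition~\ref{prop1}. For the first two bullets I would fix $K=\QQ(i)$, so $\cO_K=\ZZ[i]$ and $\cO_K^\ast=\{\pm1,\pm i\}$, and carry the algorithm through for $d=1$ and $d=2$: compute $A_1=A_2$ (classes in $\ZZ[i]$ of norm dividing $\mathcal{N}_K(\alpha_i\beta_i)=100$), solve the finite unit equation, form the resultants, keep the roots $X=0$ and $X=z_{k_1,u_1}$ lying in $\ZZ[i]$, and recover the triples, in each case testing whether the remaining coordinate is integral and, on the $\sqrt{\Delta}$ branch, whether $\sqrt{\Delta}\in\QQ(i)$. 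This is exactly the verification performed by the \verb|Magma| implementation and it outputs the two explicit lists.

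The content of the theorem is the third bullet, which has to hold \emph{uniformly} over the infinitely many fields $\QQ(\sqrt{D})$. The decisive remark is that $\alpha_i\beta_i=10$ is a \emph{rational} integer, so $\mathcal{N}_K(\alpha_i\beta_i)=100$ regardless of $D$; hence every $k_i\in A_i$ satisfies $\mathcal{N}_K(k_i)\le 100$, i.e. $|k_i|^2\le 100$ under the complex embedding. Expressing $k_i$ in an integral basis of $\cO_{\QQ(\sqrt D)}$, this bound forces $|\mathrm{Im}(k_i)|\le 10$, which for $|D|$ large (a short estimate gives $|D|>400$) is possible only when $k_i\in\ZZ$. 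For all such $D$ one moreover has $\cO_K^\ast=\{\pm1\}$, so every $k_iu_i$ is a rational divisor of $10$; the unit equation, the candidates $X=0$ and $X=z_{k_1,u_1}$, and the coefficients of $\Delta$ then all become rational, and the computation collapses onto the one already carried out over $\QQ$.

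The only way a genuinely new $\QQ(\sqrt D)$-point could still appear, once $k_1u_1\in\ZZ$, is through the $\sqrt{\Delta}$ branch: the corresponding point lies a priori in $\cD(K(\sqrt{\Delta}))$, and it is new over $\QQ(\sqrt D)$ exactly when the now-rational $\Delta$ lies in $D\cdot\QQ^{\ast2}$. Since $k_1u_1$ ranges over the finitely many rational divisors of $10$ allowed by the norm bound, and $\Delta$ depends only on $k_1u_1$ (not on $d$), $\Delta$ takes finitely many values, each determining a single square class of $D$; so only finitely many further $D$ need attention. I would therefore conclude by running the algorithm on the finite remaining list of fields --- those with $|D|\le 400$, which in particular contains the two fields $\QQ(i)$ and $\QQ(\sqrt{-3})$ with non-trivial unit group, together with the finitely many square classes produced by the $\Delta$-analysis --- checking for each the finitely many $d$ permitted by condition~(b) of Proposition~\ref{prop1} (for all other $d$ both sides reduce to $\{(0,0,0)\}$), and verifying that only $(\QQ(i),d=1)$ and $(\QQ(i),d=2)$ contribute triples beyond the rational ones.

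I expect the main obstacle to be precisely this reduction to a finite set of fields: one must combine the field-independent norm bound $\mathcal{N}_K(k_i)\le 100$, the finiteness of $\cO_K^\ast$ for imaginary quadratic $K$, and the square-class constraint coming from $\Delta$ to be sure that no large-discriminant field is overlooked. Once this is secured the residual verification, although it ranges over a couple of hundred fields, is routine and is dispatched by the \verb|Magma| routines \verb|NormEquation|, \verb|UnitEquation| and \verb|IsIntegral| already used in this section.
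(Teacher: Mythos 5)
Your proposal is correct and follows essentially the same route as the paper: specialize the algorithm to $a=b=c=1$, use the field-independent bound $\mathcal{N}_K(k_i)\mid 100$ to force $|D|\le 400$ whenever some $k_i\notin\ZZ$, sweep the finitely many small-discriminant fields with a finite shortlist of $d$ (your appeal to condition (b) of Proposition \ref{prop1} plays exactly the role of the paper's factorization $\mathcal{N}_K(w_{k_1,u_1})=rs^2$ with $w_{k_1,u_1}=z_{k_1,u_1}d$), and observe that for $|D|>400$ the units are $\pm1$ and all data become rational, collapsing the computation onto the rational case. Your explicit square-class analysis of the $\sqrt{\Delta}$ branch (checking when the rational $\Delta$ lies in $D\cdot\QQ^{*2}$) is in fact a welcome refinement: the paper compresses this step into the assertion that "the case now parallels the rational one," so your version spells out the one point the published proof leaves implicit.
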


\begin{proof} Let us put $a=b=c=1$ in the algorithm. Under these hypothesis, the sets $A_i$ given on (\ref{Ais}) satisfy $A_1=A_2$ and, if $k_i\in A_i$ then it must hold $\mathcal{N}_{K}(k_i) \, | \, 100$. Now we have $k_i=u+v\sqrt{D}$ (resp. $k_i=(u+v\sqrt{D})/2$) if $D\not\equiv 1 \,(\mbox{mod $4$})$ (resp. $D\equiv 1\,(\mbox{mod $4$})$). 

$-$ If $v\ne 0$ then $|D|\le 100$ (resp.  $|D|\le 400$). Hence we just perform the algorithm for all imaginary quadratic fields up to this bound. For every $(k_1,k_2)\in A_1\times A_2$ we compute the unit equation $k_1u_1+k_2u_2=-4$ (see equation (\ref{unit})). For every solution of this, we make $w_{k_1,u_1}=z_{k_1,u_1} d$. Please note that up to now our arguments are independent of $d$. As we need $z_{k_1,u_1}\in \mathcal{O}_K$, in particular $w_{k_1,u_1}\in \mathcal{O}_K$. So with this condition we can forget about those elements where $\mathcal{N}_{K}(w_{k_1,u_1})\notin\ZZ$. Next we factor $\mathcal{N}_{K}(w_{k_1,u_1})=r\cdot s^2$ with $r \in \ZZ / \ZZ^2$ and $(r,s)=1$. This way, we obtain the candidate shortlist $d=s$. Now we run the algorithm to compute $\mathcal{AP}_{(1,1,1,d)} (\QQ(\sqrt{D}))$ within a finite set of pairs $(d,D)$. For all these, we get 
$$
\mathcal{AP}_{(1,1,1,d)} (\QQ(\sqrt{-D})) \ne \mathcal{AP}_{(1,1,1,d)} (\QQ) \mbox{ iff } (D,d)= (-1,1),(-1,2).
$$

$-$ If $v=0$ we have $k_i\in\{1,2,5,10\}$ for all $D$. As we have previously dealt with the cases $|D|\le 400$ or $|D|\le 100$ (depending on $D \mbox{ mod } 4$) we have to concern about $|D|>400$ or $|D|>100$, and then $A_i=\{1,2,5,10\}$ and the units in the ring of integers of $\QQ(\sqrt{-D})$ are just $\pm 1$. The case now parallels the rational one and $\mathcal{AP}_{(1,1,1,d)} (\QQ(\sqrt{-D})) =\mathcal{AP}_{(1,1,1,d)} (\QQ)$.

Please note that Silverman \cite[Theorem 0.1]{Silverman} proves (among many other things) that if $d \geq 3$, the only quadratic imaginary field for which there are Markoff triples at all is $D=-1$. So we might have proved the theorem looking only at cases $(d,D)\in \{ (1,D),(2,D),(d,-1)\}$ for any $D$ and $d\ge 3$. In any case, for these cases we would still need the previous arguments. So, outstanding as Silverman's result is, it also is of little use for the proof of this result.
\end{proof}

For the case of real quadratic case, we have put our algorithm to work in $K = \QQ(\sqrt{D})$, for $|d| \leq 10^3$ and $1<\Delta_K \leq 10^4$ ($\Delta_K$ being the discriminant of $\mathcal{O}_K$). After these computations, we can state the following conjecture.

\begin{conjecture} 
$$
\# \left[ \bigcup_{(d,D) \in \ZZ/\ZZ^*\times \ZZ/\ZZ^2}  \cA\cP_{(1,1,1,d)} \left(\QQ( \sqrt{D} )\right) \right] = 178.
$$
Moreover, the following table showes all the triples in a.p. over quadratic fields:
{\footnotesize
\begin{longtable}{|c|c|l|}
\hline
$d$ & $D$ & Markoff triples in a.p. over $\mathbb{Z}[\alpha]=\mathcal{O}_{\mathbb{Q}(\sqrt{D})}$ not in $\mathbb{Z}$: first term and difference \\
\hline
\multirow{23}{*}{$1$}  & \multirow{3}{*}{$-1$}  & $(\alpha + 2, \alpha - 3)$, $(-\alpha + 2, \alpha)$, $(2\alpha - 1, -2\alpha)$, $(\alpha + 2, -\alpha)$, $(-4\alpha + 2, 2\alpha)$\\ 
 & &  $(-3\alpha - 4, \alpha + 3)$, $(-2\alpha - 1, 2\alpha)$, $(4\alpha + 2, -2\alpha)$, $(2, 2\alpha)$, $(2, -2\alpha)$\\
 & & $(3\alpha - 4, -\alpha + 3)$, $(-\alpha + 2, -\alpha - 3)$ \\
\cline{2-3}
 &  \multirow{3}{*}{$2$} & $(4\alpha + 8, -4\alpha)$, $(11\alpha - 13, -23\alpha - 6)$, $(-4\alpha + 8, 4\alpha)$, $(-7\alpha - 7, 7\alpha)$\\
 & & $(-11\alpha - 13, 23\alpha - 6)$, $(-35\alpha - 25, 23\alpha + 6)$, $(35\alpha - 25, -23\alpha + 6)$\\
 & &  $(7\alpha - 7, -7\alpha)$ \\
\cline{2-3}
 &  $3$ & $(9\alpha + 18, -9\alpha)$, $(-9\alpha + 18, 9\alpha)$ \\
\cline{2-3}
 &  \multirow{7}{*}{$5$} & $(-35\alpha - 5, 22\alpha + 11)$, $(22\alpha - 11, -22\alpha - 11)$, $(-7\alpha - 6, 4\alpha + 5)$\\
 & &  $(-9\alpha + 8, 22\alpha + 11)$, $(35\alpha + 30, -22\alpha - 11)$, $(-7\alpha -9, 2\alpha + 7)$,\\
 & & $(\alpha + 4, -5\alpha + 2)$, $(-3\alpha + 5, -2\alpha - 7)$, $(-\alpha + 3, 4\alpha - 1)$\\
 & & $(7\alpha + 1, -4\alpha + 1)$, $(-\alpha + 3, 5\alpha + 7)$, $(14\alpha - 3, -10\alpha - 2)$\\
 & &  $(-14\alpha - 17, 10\alpha + 8)$, $(9\alpha + 17, -22\alpha - 11)$, $(-22\alpha - 33, 22\alpha + 11)$\\
 & &  $(3\alpha + 8, 2\alpha - 5)$, $(7\alpha - 2, -2\alpha + 5)$, $(-9\alpha+ 8, 5\alpha - 2)$, $(9\alpha + 17, -5\alpha - 7)$\\
 & &  $(6\alpha - 1, -10\alpha - 8)$, $(-6\alpha - 7, 10\alpha + 2)$, $(\alpha + 4, -4\alpha - 5)$ \\
\cline{2-3}
 &  $6$ & $(-6\alpha - 12, 6\alpha)$, $(6\alpha - 12, -6\alpha)$, $(3\alpha - 3, -3\alpha)$, $(-3\alpha - 3, 3\alpha)$ \\
\cline{2-3}
 &  $11$ & $(-2\alpha - 4, 4\alpha - 6)$, $(6\alpha - 16, -4\alpha + 6)$, $(-6\alpha - 16, 4\alpha + 6)$, $(2\alpha - 4, -4\alpha - 6)$ \\
\cline{2-3}
 &  $14$ & $(-2\alpha - 4, 2\alpha)$, $(2\alpha - 4, -2\alpha)$ \\
\cline{2-3}
 &  $17$ & $(4\alpha + 13, \alpha - 10)$, $(-4\alpha + 9, -\alpha - 11)$, $(6\alpha - 7, -\alpha + 10)$, $(-6\alpha - 13, \alpha + 11)$ \\
\cline{2-3}
 &  $21$ & $(3\alpha - 3, 9)$, $(-3\alpha + 12, -9)$, $(-3\alpha - 6, 9)$, $(3\alpha + 15, -9)$ \\
\cline{2-3}
 & \multirow{2}{*}{$29$}& $(-11\alpha - 32, 7\alpha + 14)$, $(11\alpha - 21, -7\alpha + 7)$, $(3\alpha - 4, -2\alpha + 5)$, \\
 & & $(-3\alpha - 7, 2\alpha + 7)$, $(\alpha + 7, -2\alpha - 7)$, $(-3\alpha - 7, 
7\alpha - 7)$, $(-\alpha + 6, 2\alpha - 5)$, \\
& & $(3\alpha - 4, -7\alpha - 14)$ \\
\cline{2-3}
 &  \multirow{2}{*}{$41$} & $(-4\alpha + 15, \alpha - 10)$, $(2\alpha - 3, -\alpha + 4)$, $(5, -\alpha - 5)$, $(2\alpha - 3, \alpha + 11)$\\
 & & $(-2\alpha - 5, \alpha + 5)$,  $(5, \alpha - 4)$, $(4\alpha + 19, -\alpha - 11)$, $(-2\alpha - 5, -\alpha + 10)$ \\
\hline
\multirow{5}{*}{$2$}  &  $-1$ & $(1, -\alpha)$, $(2\alpha + 1, -\alpha)$, $(1, \alpha)$, $(-2\alpha + 1, \alpha)$ \\
\cline{2-3}
 &  $2$ & $(-2\alpha + 4, 2\alpha)$, $(2\alpha + 4, -2\alpha)$ \\
 \cline{2-3}
 &  $6$ & $(3\alpha - 6, -3\alpha)$, $(-3\alpha - 6, 3\alpha)$ \\
 \cline{2-3}
 &  $11$ & $(-3\alpha - 8, 2\alpha + 3)$, $(3\alpha - 8, -2\alpha + 3)$, $(\alpha - 2, -2\alpha - 3)$, $(-\alpha - 2, 2\alpha - 3)$ \\
 \cline{2-3}
 &  $14$ & $(-\alpha - 2, \alpha)$, $(\alpha - 2, -\alpha)$ \\
\hline
\multirow{3}{*}{$3$}   &  3 & $(3\alpha + 6, -3\alpha)$, $(-3\alpha + 6, 3\alpha)$ \\
\cline{2-3}
 &  $6$ & $(\alpha - 1, -\alpha)$, $(-2\alpha - 4, 2\alpha)$, $(-\alpha - 1, \alpha)$, $(2\alpha - 4, -2\alpha)$ \\
 \cline{2-3}
 & $ 21$ & $(\alpha + 5, -3)$, $(\alpha - 1, 3)$, $(-\alpha + 4, -3)$, $(-\alpha - 2, 3)$ \\
\hline
$4$ &  $2$ & $(\alpha + 2, -\alpha)$, $(-\alpha + 2, \alpha)$ \\
\hline
$6$ &  $6$ & $(-\alpha - 2, \alpha)$, $(\alpha - 2, -\alpha)$ \\
\hline
$7$ &  $2$ & $(\alpha - 1, -\alpha)$, $(-\alpha - 1, \alpha)$ \\
\hline
$9$ &  $3$ & $(\alpha + 2, -\alpha)$, $(-\alpha + 2, \alpha)$ \\
\hline
$11$ &  $5$ & $(-2\alpha - 3, 2\alpha + 1)$, $(2\alpha - 1, -2\alpha - 1)$ \\
\hline
\end{longtable}
}
In particular, if $(d,D) \in \ZZ/\ZZ^* \times \ZZ/\ZZ^2$ does not appear in the following tables, then $\cA\cP_{(1,1,1,d)} \left(\QQ( \sqrt{D} )\right) = \cA\cP_{(1,1,1,d)} (\QQ)$:

\begin{longtable}{|c|c|c|c|c|c|c|c|c|c|c|c|}
\hline
$d$ & \multicolumn{11}{|c|}{$1$} \\
\hline
$D$ & $-1$ & $2$ & $3$ & $5$ & $6 $& $11$& $14$ & $17$ & $21$ & $29$ & $41$ \\
\hline
$\# \mathcal{AP}_{(1,1,1,d)} (\QQ(\sqrt{D}))$ &$16$ & $12$& $6$& $26$& $8$& $8$& $6$& $8$& $8$& $12$& $12$ \\
\hline
\end{longtable}
\vspace*{-4mm}
\begin{longtable}{|c|c|c|c|c|c|c|c|c|c|c|c|c|c|}
\hline
$d$ & \multicolumn{5}{|c|}{$2$} & \multicolumn{3}{|c|}{$3$}  & $4$ & $6$ & $7$ & $9$ & $11$\\
\hline
$D$ & $-1$ & $2$ & $6$ & $11$ & $14$ & $3$ & $6$ & $21$ & $2$ & $6$ & $2$ & $3$  & $5$ \\
\hline
$\# \mathcal{AP}_{(1,1,1,d)} (\QQ(\sqrt{D}))$ & $5$& $3$& $3$& $5$& $3$& $6$& $8$& $8$& $3$& $3$& $3$& $3$& $3$ \\
\hline
\end{longtable}

\end{conjecture}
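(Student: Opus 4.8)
The plan is to turn the exhaustive search into a proof by supplying an \emph{effective} bound on the pairs $(d,D)$ that can contribute a non-trivial triple; the mere finiteness of the union is not the issue, since it already follows from the last finiteness result of the previous section (the consequence of Corvaja--Zannier \cite{CZ}), applied with $K=\QQ$, $a=b=c=1$ and extension degree $2$, so that $\mathcal{A}_2(\QQ)$ is precisely the set of quadratic fields $\QQ(\sqrt{D})$. What must be added is (i) an explicit $B$ such that no non-trivial triple occurs once $|d|>B$ or $\Delta_K>B$, and (ii) the check that within these bounds the algorithm of Section \ref{sec3} returns exactly the $178$ triples of the table.

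The reduction I would use is a direct elimination. Writing a non-trivial triple as $x,\,y=(x+z)/2,\,z$ and setting $s=x+z$, $p=xz$, the equation $x^2+y^2+z^2=d\,xyz$ becomes $5s^2-8p=2d\,ps$, whence $p=5s^2/\bigl(2(ds+4)\bigr)$ and $(x-z)^2=s^2(ds-6)/(ds+4)$. Integrality of the triple is then equivalent to $s,p\in\cO_K$ together with the square condition $(ds-6)(ds+4)\in (K^*)^2$. Putting $u=ds+4$, the membership $p\in\cO_K$ forces $2u\mid 5s^2$; since $(ds)^2=(u-4)^2\equiv 16\pmod{u}$ we have $5d^2s^2\equiv 80\pmod u$, and as $u\mid 5s^2$ also divides $5d^2s^2$, we get $u\mid 80$ in $\cO_K$. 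This divisibility is the real-field analogue of the bound $\mathcal{N}_K(k_i)\mid 100$ used in the imaginary-quadratic theorem, and it splits the problem cleanly.

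If $s\in\QQ$ then $u=ds+4\in\ZZ$ is an ordinary divisor of $80$, so $d$ and $s$ range over an explicit finite set and the field is forced, $\QQ(\sqrt{D})=\QQ\bigl(\sqrt{(ds-6)(ds+4)}\bigr)$; this sub-case is finite and effective. The genuinely difficult triples are those with $s\notin\QQ$: here $u=ds+4$ is an \emph{irrational} divisor of $80$ in $\cO_K$, so $\mathcal{N}_K(u)\mid 6400$, while the square condition reads $u(u-10)\in (K^*)^2$. Completing the square, $u(u-10)=w^2$ is the same as $(u-5-w)(u-5+w)=25$, i.e. a factorization of $25$ in $\cO_K$; writing each factor as a divisor of $25$ times a unit turns the problem into an $S$-unit equation for the unit part of $u$, finite for each field by Siegel \cite{Siegel} and solvable in practice by the method of Wildanger \cite{Wildanger}.

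The step I expect to be the genuine obstacle is making this last finiteness \emph{uniform in $D$}. For a real quadratic field $\mathcal{N}_K$ is indefinite, so $\mathcal{N}_K(u)\mid 6400$ does \emph{not} bound $D$: the element $u$ runs over $\delta\,\cO_K^*$ for one of finitely many $\delta\mid 80$, and $\cO_K^*$ is infinite, so $u-10$ can be a square for fields of arbitrarily large discriminant. This is exactly the feature the imaginary case avoided, where the positive-definite norm bounds $|D|$ at once. A proof therefore requires an effective, field-uniform bound on the solutions of the $S$-unit equation above as $\QQ(\sqrt{D})$ ranges over all real quadratic fields --- equivalently, an effective version of the Corvaja--Zannier finiteness for $\cD$ (note that Silverman's results \cite{Silverman}, decisive in the imaginary case, say nothing here). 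Such uniform effectivity seems out of reach of current techniques, which is precisely why the equality with $178$ must stay conjectural; granting a bound $B$, the proof would conclude by running the algorithm of Section \ref{sec3} for every $(d,D)$ with $|d|\le B$ and $\Delta_K\le B$ and discarding the repetitions in the union, thereby matching the table.
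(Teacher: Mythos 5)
You should first be clear about what the paper itself does with this statement: it does not prove it at all. It is advanced as a \emph{conjecture}, supported only by running the Section~\ref{sec3} algorithm over the finite box $|d|\le 10^3$, $1<\Delta_K\le 10^4$. So the only checkable content of a blind attempt is whether it correctly separates what is proved (finiteness), what is computed (the table), and what is missing (an effective bound uniform in $D$) --- and your proposal gets all three right. Finiteness of the full union is indeed item (5) of the finiteness theorem applied with $K=\QQ$, $a=b=c=1$, degree $2$ (the paper makes the same point in the remark after the conjecture, via \cite{CZ}); and the obstruction you isolate --- that for real quadratic fields the norm form is indefinite, so a norm bound on the relevant element no longer bounds $D$, whereas in the imaginary case $\mathcal{N}_K(k_i)\mid 100$ forces $|D|\le 400$ --- is precisely why the paper stops at a conjecture. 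Your closing position, that the equality with $178$ can only be verified inside a box absent an effective Corvaja--Zannier statement uniform over real quadratic fields, coincides with the paper's.

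Your route to the necessary conditions is genuinely different and more elementary than the paper's: instead of the Alvanos--Poulakis apparatus (the functions $f_i$, the sets $A_i$ with $\mathcal{N}_K(k_i)\mid 100$, the two-term unit equation $k_1u_1+k_2u_2=-4$), you eliminate via $s=x+z$, $p=xz$, and your computation checks: $p=5s^2/(2(ds+4))$, $(x-z)^2=s^2(ds-6)/(ds+4)$, and $5d^2s^2=5(u-4)^2\equiv 80 \pmod{u}$ gives the clean divisibility $u=ds+4\mid 80$, the transparent analogue of the paper's norm condition. Two slips are worth flagging, neither fatal since you use only necessity: integrality of the triple also requires $y=s/2\in\cO_K$, i.e. $s\in 2\cO_K$, and your square condition $(ds-6)(ds+4)\in (K^*)^2$ excludes the constant solutions $x=y=z$ with $dx=3$, where that product vanishes. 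More substantively, your factorization $(u-5-w)(u-5+w)=25$ combined with $u\in\delta\,\cO_K^*$ yields a \emph{three-term} unit equation, not the two-term kind directly treated by Wildanger \cite{Wildanger}, so your claim of practical per-field solvability needs an extra argument (per-field finiteness is safe regardless, as $\cD$ has three points at infinity). With those caveats, your assessment and conditional strategy faithfully reproduce the paper's situation while adding an equation-specific reduction the paper does not state.
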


\begin{remark}
Denote by $\mathcal{F}_2$ the union of all quadratic fields, and for $d\in\ZZ$ define the plane curve
$$
\cD_d: dY^3-3Y^2Z-dX^2Y-2X^2Z=0.
$$
Corvaja and Zannier \cite[Corollary 1]{CZ} tell us that $\cD_d(\mathcal{O}_{\mathcal{F}_2})$ is finite. The above conjecture asserts that in fact we have been able to compute explicitely the set $\cD_d(\mathcal{O}_{\mathcal{F}_2})$. Moreover, it asserts that the uniparametric family $\cD_d$, where $d$ run over the rational intergers,  has only a finite number of points over $\mathcal{O}_{\mathcal{F}_2}$ and gives all of them.
\end{remark}

After that, we have taken a longer step, and have computed many examples for $\mathcal{AP}_{(1,1,1,3)} (K)$ where $K$ is any number field such that $|\Delta_K|\le 10^4$. In particular if $|\Delta_K|\le 10^4$ then $[K:\QQ]\le 7$, after Minkowski's bound. We have used the online tables of number fields with bounded discriminant from the PARI group \cite{pari}. There are precisely $9115$ such fields.

In the table below we display the minimal polynomial for a primitive element of $K$, along with the discriminant $\Delta_K$ and the number $n_K$ of Markoff triples in a.p. over $\mathcal{O}_K$ (for those cases where there are more than the known rational triples), that is $n_K=\# \mathcal{AP}_{(1,1,1,3)} (K)$.  
{\small 
\begin{longtable}{|c|c|c||c|c|c|}
\hline
K & $\Delta_K$ & $n_K$ & K & $\Delta_K$ & $n_K$\\
\hline
$x^2 - 3$ & $12$ & $6$ & $x^2 - x - 5$ & $21$ & $8$\\ \hline 
$x^2 - 6$ & $24$ & $8$ & $x^3 - 2$ & $-108$ & $6$\\ \hline 
$x^3 - x^2 - 8x - 3$ & $1425$ & $6$ & $x^3 - x^2 - 6x + 3$ & $993$ & $6$\\ \hline 
$x^3 - x^2 - 8x + 9$ & $1257$ & $8$ & $x^3 - x^2 - 7x + 4$ & $1509$ & $6$\\ \hline 
$x^3 - x^2 - 8x - 1$ & $1937$ & $6$ & $x^3 - 9x - 3$ & $2673$ & $6$\\ \hline 
$x^3 - x^2 - 13x + 1$ & $2292$ & $6$ & $x^3 - x^2 - 9x + 6$ & $3021$ & $6$\\ \hline 
$x^3 - 8x - 2$ & $1940$ & $6$ & $x^3 - 9x - 5$ & $2241$ & $6$\\ \hline 
$x^3 - 9x - 2$ & $2808$ & $6$ & $x^3 - 10x - 2$ & $3892$ & $6$\\ \hline 
$x^3 - x^2 - 10x + 7$ & $4065$ & $6$ & $x^3 - x^2 - 15x - 15$ & $3540$ & $6$\\ \hline 
$x^3 - x^2 - 18x + 33$ & $5073$ & $6$ & $x^3 - x^2 - 21x + 33$ & $5172$ & $6$\\ \hline 
$x^3 - x^2 - 13x - 10$ & $3877$ & $6$ & $x^3 - x^2 - 12x + 15$ & $4281$ & $5$\\ \hline 
$x^3 - 21x - 24$ & $5373$ & $6$ & $x^3 - 12x - 6$ & $5940$ & $6$\\ \hline 
$x^3 - x^2 - 23x + 39$ & $6108$ & $6$ & $x^3 - x^2 - 12x + 3$ & $7473$ & $6$\\ \hline 
$x^3 - x^2 - 17x + 27$ & $8628$ & $6$ & $x^3 - x^2 - 18x + 30$ & $9192$ & $6$\\ \hline 
$x^3 - 30x - 27$ & $9813$ & $6$ & $x^4 - x^2 + 1$ & $144$ & $6$\\ \hline 
$x^4 - x^3 - x^2 - 2x + 4$ & $441$ & $8$ & $x^4 - 2x^2 + 4$ & $576$ & $8$\\ \hline 
$x^4 + 4x^2 + 1$ & $2304$ & $6$ & $x^4 + 9$ & $2304$ & $8$\\ \hline 
$x^4 + x^2 + 4$ & $3600$ & $6$ & $x^4 + 6x^2 + 18$ & $4608$ & $8$\\ \hline 
$x^4 + 3x^2 - 6x + 6$ & $4752$ & $6$ & $x^4 + 11x^2 + 25$ & $7056$ & $8$\\ \hline 
$x^4 - 2x^3 - x^2 + 2x + 22$ & $7056$ & $6$ & $x^4 - 2x^3 + 4x^2 + 6$ & $7488$ & $6$\\ \hline 
$x^4 - 2x^3 - 2x + 1$ & $-1728$ & $6$ & $x^4 - x^3 - 3x^2 - x + 1$ & $-1323$ & $8$\\ \hline 
$x^4 - x^3 - x^2 - 2x + 1$ & $-1791$ & $5$ & $x^4 - 5$ & $-2000$ & $6$\\ \hline 
$x^4 - x^2 - 3x - 2$ & $-2151$ & $5$ & $x^4 - x^3 - x^2 - 5x - 5$ & $-2475$ & $6$\\ \hline 
$x^4 - x^3 - x - 2$ & $-2943$ & $6$ & $x^4 - 3x^2 - 1$ & $-2704$ & $6$\\ \hline 
$x^4 - 4x^2 - 3x + 1$ & $-2763$ & $6$ & $x^4 - 2x^2 - 4$ & $-1600$ & $6$\\ \hline 
$x^4 - x^2 - 3x + 1$ & $-3303$ & $6$ & $x^4 + x^2 - 6x + 1$ & $-3312$ & $6$\\ \hline 
$x^4 - 2x^3 - x^2 + 2x - 2$ & $-3312$ & $6$ & $x^4 - 2x^3 - 2x + 2$ & $-3632$ & $6$\\ \hline 
$x^4 + 3x^2 - 9$ & $-3600$ & $10$ & $x^4 - x^3 + 2x^2 + x - 2$ & $-3951$ & $6$\\ \hline 
$x^4 - 2x^2 - 2$ & $-4608$ & $8$ & $x^4 + 2x^2 - 2$ & $-4608$ & $6$\\ \hline 
$x^4 - 2x^3 + 3x^2 + x - 2$ & $-4671$ & $5$ & $x^4 - 2x^3 - 3x - 1$ & $-4675$ & $6$\\ \hline 
$x^4 - 3x^2 - 6x - 3$ & $-5616$ & $6$ & $x^4 - 2x^3 + 3x^2 - 2x - 2$ & $-5616$ & $6$\\ \hline 
$x^4 + 2x^2 - 11$ & $-6336$ & $8$ & $x^4 - 2x^2 - 11$ & $-6336$ & $6$\\ \hline 
$x^4 - x^3 + 2x^2 - 2x - 2$ & $-6444$ & $6$ & $x^4 - 2x^3 + x^2 - 3$ & $-6768$ & $6$\\ \hline 
$x^4 - x^2 - 6x - 2$ & $-6768$ & $6$ & $x^4 - 3$ & $-6912$ & $6$\\ \hline 
$x^4 - x^3 - 4x - 5$ & $-6507$ & $6$ & $x^4 - 2x^2 - 3x + 3$ & $-6603$ & $6$\\ \hline 
$x^4 - x^3 - 3x^2 + 4x + 2$ & $-7668$ & $6$ & $x^4 - x^3 - x^2 + 10x - 20$ & $-6975$ & $7$\\ \hline 
$x^4 - x^2 - 3$ & $-8112$ & $6$ & $x^4 - x^3 + x^2 + 3x - 3$ & $-8739$ & $5$\\ \hline 
$x^4 - 2x^3 - 3x^2 + 4x - 2$ & $-8640$ & $8$ & $x^4 - 2x^3 - 3x^2 - 2x + 1$ & $-8640$ & $8$\\ \hline 
$x^4 - x^3 - x^2 - 4x - 2$ & $-9012$ & $6$ & $x^4 - x^3 + 5x^2 + x - 2$ & $-9036$ & $6$\\ \hline 
$x^4 - 2x^3 - 4x^2 - 2x + 1$ & $-9408$ & $6$ & $x^4 + 3x^2 - 12$ & $-9747$ & $7$\\ \hline 
$x^4 - x^3 - 4x^2 + 4x + 1$ & $1125$ & $10$ & $x^4 - 6x^2 + 4$ & $1600$ & $6$\\ \hline 
$x^4 - 2x^3 - 7x^2 + 8x + 1$ & $3600$ & $6$ & $x^4 - 4x^2 + 1$ & $2304$ & $24$\\ \hline 
$x^4 - 2x^3 - 3x^2 + 4x + 1$ & $4752$ & $6$ & $x^4 - 2x^3 - 4x^2 + 5x + 5$ & $2525$ & $8$\\ \hline 
$x^4 - 2x^3 - 4x^2 + 2x + 1$ & $7488$ & $6$ & $x^4 - 5x^2 + 1$ & $7056$ & $12$\\ \hline 
$x^4 - x^3 - 7x^2 + 3x + 9$ & $4525$ & $7$ & $x^4 - 2x^3 - 7x^2 + 2x + 7$ & $9792$ & $6$\\ \hline 
$x^4 - 6x^2 - 3x + 3$ & $9909$ & $10$ & $x^4 - x^3 - 5x^2 + 3x + 4$ & $8468$ & $6$\\ \hline 
$x^4 - 5x^2 + 2$ & $9248$ & $6$ & $x^5 - 2x^4 + x^2 - 2x - 1$ & $-9759$ & $6$\\ \hline 
\end{longtable}
}
The behaviour seems rather unpredictable here, including some instances where some fields do appear, but none of its subfields do (like the example with $\Delta_K = 1125$ above). And the existence of many examples close to the chosen bound prevents us to establish a conjecture for this case, as we did above.

\begin{remark} Here we present the results concerning the density of number fields of bounded discriminant which have non--rational Markoff triples in a.p.; let $\Delta \in \NN$ and  
$$
r_\Delta = \frac{ \displaystyle \#  \left\{ K \; | \; |\Delta_K|\le \Delta, \;   \cA\cP_{(1,1,1,3)} (\QQ)\subsetneq \cA\cP_{(1,1,1,3)}(K)  \right\} }{ \displaystyle \# \{ K \; | \; |\Delta_K| \leq \Delta \} }.
$$

Then

{
\begin{center}
\begin{tabular}{|c|c|c|c|c|c|c|}
\hline
$\Delta$ & $50$ & $100$ & $500$ & $1000$ & $5000$ & $10000$ \\
\hline
$r_\Delta$ & $0.088$ & $0.042$ & $0.015$ & $0.009$ & $0.012$ & $0.0010$ \\
\hline
\end{tabular}
\end{center}
}\end{remark}

\

\

{\bf Data:} All the \verb+Magma+ sources are available on the first author's webpage.


\bibliographystyle{amsplain}

\begin{thebibliography}{99}

\bibitem{AlvanosPoulakis}
P. Alvanos, D. Poulakis: {\em Solving genus zero Diophantine equations over number fields.} J. Symbolic Comput. {\bf 46} (2011) 54--69.

\bibitem{Baragar} 
A. Baragar: {\em On the unicity conjecture for Markoff numbers}.  Canad. Math. Bull. {\bf 39} (1996) 3--9. 

\bibitem{BaragarRM}
A. Baragar: {\em The Markoff--Hurwitz equation over number fields}. Rocky Mountain J. Math. {\bf 35} (2005) 695--712.

\bibitem{magma}
 W. Bosma, J. J. Cannon, C. Fieker, A. Steel (Eds.): {\em Handbook of Magma Functions}. Edition 2.17-3 (2011). \verb|http://magma.maths.usyd.edu.au/magma/|

\bibitem{Button1}
J.O. Button: {\em The uniqueness of the prime Markoff numbers}. J. London Math. Soc. {\bf 58} (1998) 9--17. 

\bibitem{Button2} 
J.O. Button: {\em Markoff numbers, principal ideals and continued fraction expansions}.  J. Number Theory {\bf 87}  (2001) 77--95.


\bibitem{CZ}
P. Corvaja, U. Zannier: {\em On the number of integral points on algebraic curves}. J. Reine Angew. Math. {\bf 565} (2003) 27--42.

\bibitem{Frobenius}
G. Fr\"obenius: {\em \"Uber die Markoffschen Zahlen}. Preuss. Akad. Wiss. Sitzungsberichte (1913) 458--487.

\bibitem{Hurwitz}
A. Hurwitz: {\em \"Uber eine Aufgabe der unbestimmtem analysis}. Arch. Math. Phys. {\bf 3} (1907) 185--196.

\bibitem{Lang}
S. Lang: {\em Fundamentals of Diophantine Geometry}. Springer (1983).

\bibitem{Maillet}
E. Maillet: {\em D\'etermination des points entiers des courbes unicursales \`a coefficients entiers.} C.R. Acad. Paris {\em 168} (1918) 217--220.

\bibitem{Markoff1}
A.A. Markoff: {\em Sur les formes binaires quadratiques indefinies 1}.  Math.  Ann. {\bf 15} (1879) 381--409.

\bibitem{Markoff2}
A.A. Markoff: {\em Sur les formes binaires quadratiques indefinies 2}. Math. Ann. {\bf 17} (1880) 379--399.

\bibitem{pari}
PARI Group: {\em A database of number fields of bounded discriminant.}{\\ \verb|http://pari.math.u-bordeaux1.fr/pub/pari/packages/nftables/|}. 

\bibitem{PoulakisVoskos}
D. Poulakis, E. Voskos: {\em On the practical solution of genus zero Diophantine equations.} J. Symbolic Comput. {\bf 30} (2000) 573--582.


\bibitem{Rosenberger}
G. Rosenberger: {\em \"Uber die diophantische Gleichung $ax^2+by^2+cz^2 = dxyz$}. J. Reine Angew. Math. {\bf 305} (1979) 122--125.

\bibitem{Schmutz}
P. Schmutz: {\em Systoles of arithmetic surfaces and the Markoff spectrum}. Math. Ann. {\bf 305} (1996) 191--203. 

\bibitem{Siegel}
C.L. Siegel: {\em \"Uber einige Anwendungen diophantischer Approximationen}. Abhandlungen Akad. Berlin (1929) {\bf 1} 70 S (1929).

\bibitem{Silverman}
J.H. Silverman: {\em The Markoff equation $X^2+Y^2+Z^2=aXYZ$ over quadratic imaginary fields}. J. Number Theory {\bf 35} (1990) 72--104.

\bibitem{Smart}
N.P. Smart: {\em The algorithmic resolution of Diophantine equations.} London Mathematical Society Student Texts {\bf 41}. Cambridge University
Press, Cambridge (1998).

\bibitem{Wildanger}
K. Wildanger: {\em \"{U}ber das {L}\"osen von {E}inheiten- und {I}ndexformgleichungen in algebraischen {Z}ahlk\"orpern}. J. Number Theory {\bf 82} (2000) 188--224.

\end{thebibliography}

\end{document}